
\documentclass[letterpaper, 10 pt, conference]{ieeeconf}  

\IEEEoverridecommandlockouts                              

\overrideIEEEmargins                                      





\usepackage{hyperref}
\usepackage{graphicx,color}
\graphicspath{{./IMG/}{images/}}
\usepackage{amsmath}
\usepackage{amssymb}
\usepackage{mathrsfs}
\usepackage{subfigure}
\usepackage{url}
\usepackage{booktabs}
\usepackage{array}
\usepackage[table]{xcolor}
\usepackage{mathtools} 		
\usepackage{epstopdf}
\usepackage{cite}
\usepackage[vlined,ruled]{algorithm2e}
\usepackage{upgreek}
\usepackage{dsfont}
\usepackage{comment}

\newtheorem{theorem}{Theorem}[section]
\newtheorem{lemma}[theorem]{Lemma}

\newtheorem{proposition}[theorem]{Proposition}

\newtheorem{remark}{Remark}
\newtheorem{assumption}{Assumption}

\newcommand{\mc}{\mathcal}

\newcommand{\real}{\mathbb{R}}

\newcommand{\integnneg}{\mathbb{Z}_{\geq 0}}
\newcommand{\complex}{\mathbb{C}}
\newcommand{\tsp}{\mathsf{T}} 
 
\newcommand{\inv}{{\negat 1}} 
\newcommand{\negat}{\scalebox{0.75}[.9]{\( - \)}}

\newcommand*{\QEDB}{\hfill\ensuremath{\square}}
\newcommand*{\QEDBL}{\hfill\ensuremath{\blacksquare}}
\newcommand\oprocendsymbol{\hbox{$\square$}}
\newcommand\oprocend{\relax\ifmmode\else\unskip\hfill%
\fi\oprocendsymbol}

\newcommand{\setdef}[2]{\{#1 \; : \; #2\}}

\newcommand{\sps}[2]{{#1}^{\textup{#2}}}

\newcommand{\until}[1]{\{1,\dots,#1\}}

\newcommand{\norm}[1]{\Vert #1 \Vert}





\title{\LARGE \bf
Optimization of Linear Multi-Agent Dynamical Systems \\ 
via Feedback Distributed Gradient Descent  Methods}


\author{Amir Mehrnoosh and Gianluca Bianchin\thanks{
The authors are with ICTEAM Institute and the Department of Mathematical Engineering at UCLouvain, Belgium. A. Mehrnoosh is supported by F.R.S.-FNRS.
\{\href{mailto:amir.mehrnoosh@uclouvain.be}{\texttt{amir.mehrnoosh}},
    \href{mailto:gianluca.bianchin@uclouvain.be}{\texttt{gianluca.bianchin\}@uclouvain.be}.}
    This work was supported in part by the ARC project `SIDDARTA'.
}}

\begin{document}

\maketitle
\thispagestyle{empty}
\pagestyle{empty}

\begin{abstract}

Feedback optimization is an increasingly popular control paradigm to optimize dynamical systems, accounting for control objectives that concern the system operation at steady-state. Existing feedback optimization techniques heavily rely on centralized systems and controller architectures, and thus suffer from scalability and privacy issues when systems become large-scale. In this paper, we propose a distributed architecture for feedback optimization inspired by distributed gradient descent, whereby each agent updates its local control variable by combining the average of its neighbors with a local negative gradient step. Under convexity and smoothness assumptions for the cost, we establish convergence of the control method to a critical optimization point. By reinforcing the assumptions to restricted strong convexity, we show that our algorithm converges linearly to a neighborhood of the optimal point, where the size of the neighborhood depends on the choice of the stepsize. Simulations corroborate the theoretical results.

\smallskip
\textit{Index Terms} -- Optimization algorithms, feedback optimization, distributed control, multi-agent systems.
\end{abstract}

\section{Introduction}

Optimal steady-state regulation is concerned with the problem of controlling a 
dynamical systems to an optimal steady-state point, as characterized by a 
mathematical optimization problem~\cite{AJ-ML-PB:09}. The classical approach 
to tackle this goal relies on the principle of separation between planning and 
control, whereby the optimization problem is solved beforehand (offline) to 
determine optimal system states, which are then inputed as references to 
controllers responsible for regulating the system to these states. 
Remarkably, a key assumption in this approach is that disturbances are known 
beforehand and fed to the optimization solver; this allows the optimization 
to be solved with high precision to generate the required reference states. 
Unfortunately, in most control applications, disturbances are unknown. Often, the main objective of a control system is to ensure optimality in 
the face of unmeasurable disturbances or imprecise system knowledge. 
Notably, classical 
batch optimization algorithms fail~\cite{ED-AS-SB-LM:20} when are 
approximately known or vary after the optimization has been solved because disturbances may perturb optimal steady states.

Recently, several authors have studied optimal steady-state regulation 
problems in a centralized setting. A list of representative works on this 
topic includes~\cite{MC-ED-AB:20,AH-SB-GH-FD:20,SM-AH-SB-GH-FD:18,LL-JS-EM:18,GB-JC-JP-ED:21-tcns,GB-JIP-ED:20-automatica,FB-HD-CE:12}. See also the recent developments using zeroth order 
algorithms and data-driven approaches~\cite{ZH-SB-JH-FD-XG:23,XC-JP-NL:22,GB-MV-JC-ED:24-tac}.
Feedback optimization controllers have gained popularity thanks to their 
capability to regulate physical systems to optimal steady-state points while 
rejecting constant~\cite{AH-SB-GH-FD:20} or time-varying 
disturbances~\cite{GB-JC-JP-ED:21-tcns,GB-JIP-ED:20-automatica}.
The central idea consists of adapting numerical optimization algorithms to 
operate as feedback controllers. This is achieved by using an inexact gradient 
evaluated using real-time measurements to update control inputs without 
requiring the exact plant model and disturbances. This feature endows feedback optimization with the versatility to handle various scenarios. 
All of these methods are designed to be implemented in a centralized 
architecture, and thus suffer from scalability issues when systems become 
large-scale, as well as privacy concerns when cost functions or feedback 
signals need to be maintained private. This work departs from this existing 
literature by focusing on the problem of optimal steady-state regulation for 
systems with a distributed architecture. This connects our 
work with the body of literature on distributed optimization. Distributed gradient descent (DGD) was proposed in~\cite{AN-AO:07}, studied 
in~\cite{KY-QL-WY:16N}, a diminishing stepsize was used in~\cite{AN-AO-PAP:10} 
to ensure exact convergence;  
see also~\cite{JD-AA-MW:11,DJ-JX-JM:14,AN-AO:09,CX-VM-RX-EA-UK:18}. 
Other distributed optimization algorithms have been explored in recent years; we refer to~\cite{TY-XY-JW-etal:19} for a comprehensive discussion.
Particularly related to our problem are the works~\cite{TL-ZQ-YH-ZPJ:21,WW-ZH-GB-SB-FD:23,CC-MC-JC-ED:19,GC-GN:22}. Compare to~\cite{TL-ZQ-YH-ZPJ:21}, we do not require a two-layer control architecture and tracking controllers; in contrast to~\cite{WW-ZH-GB-SB-FD:23}, we do not approximate the system’s sensitivity matrix by its diagonal elements, ignoring the coupling between subsystems, which leads to a loss in accuracy; \cite{CC-MC-JC-ED:19} focuses on systems modeled as a static linear map, while in this work we account for dynamics; finally, with respect to~\cite{GC-GN:22}, we account for performance metrics that depend on a vector quantity as opposed to a scalar aggregate one. 

This work features three main contributions. First, we propose a distributed 
architecture for the optimal steady-state regulation problem, and a distributed control algorithm to address this problem. Our algorithm is inspired by distributed optimization approaches and combines a gradient-descent step with a consensus operation to simultaneously solve an optimization and seek an agreement between the agents. Second, we present proof of convergence to a fixed point for the controller-system state. Our technical arguments provide guidelines on how to choose the controller stepsize to guarantee convergence of the controlled system. 
Third, we provide an explicit bound for the control error. Precisely, we show that under restricted strong convexity assumptions, the controller state converges linearly to a neighborhood of the optimal point. In line with the existing literature~\cite{KY-QL-WY:16N}, the size of such a neighborhood depends on the choice of the controller stepsize. Intuitively, convergence cannot be exact since distributed controllers need to average between moving in a direction that decreases the gradient while maintaining an agreement with the other controllers. 

The rest of the article is organized as follows.
Section~\ref{sec:problem_formulation} formalizes the problem of distributed optimal steady-state regulation. Section~\ref{sec:control_design} illustrates the proposed controller. Section~\ref{sec:analysis} presents the two main contributions of this work. Section~\ref{sec:simulations} validates the findings through numerical simulations, and Section~\ref{sec:conclusions} concludes the paper.

\textbf{Notation.}
For a symmetric matrix $W$, we denote its eigenvalues by 
$\lambda_1(W) \geq \lambda_2(W) \geq \cdots  \geq \lambda_N(W).$
Given a symmetric and doubly stochastic matrix $W$, we let the 
eigenvalues be sorted in a nonincreasing order:
$1 = \lambda_1(W) \geq \lambda_2(W) \geq \cdots \geq \lambda_N(W) 
{>} -1.$ Moreover, we let
$\beta := \max \{ \vert \lambda_2(W) \vert, \vert \lambda_N(W)\vert \}.$

\section{Problem Setting}

\label{sec:problem_formulation}

\begin{figure}[t]
\centering 
\includegraphics[width=0.9\columnwidth]{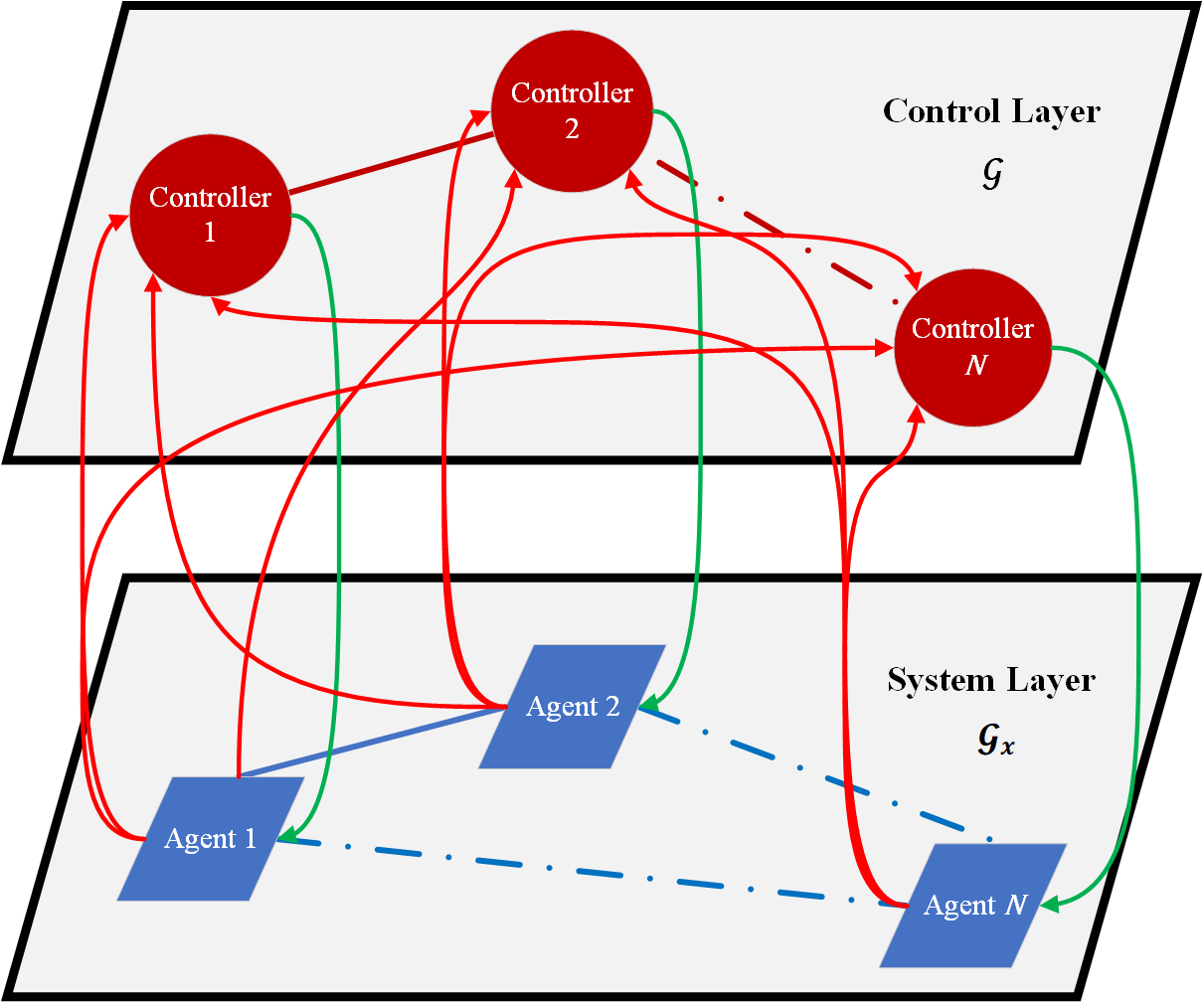}
\caption{Distributed system architecture considered in this work 
(cf.~\eqref{eq:plantModel_distributed}). Each local controller actuates 
the corresponding subsystem (green lines), by using global feedback 
information (red lines), see~\eqref{eq:distributed_control_algorithm}.}
\vspace{-.6cm}
\label{fig:architecture}
\end{figure}

\subsection{System to control}
Consider a dynamical system composed of $N$ subsystems, 
indexed by $\mc V_x = \until N;$ we describe the physical couplings 
between the subsystems using a directed graph (called \textit{system graph}) 
$\mc G_x =(\mc V_x, \mc E_x)$,  where 
$\mc E_x \subseteq \mc V_x \times \mc V_x.$
See Fig.~\ref{fig:architecture}-(System Layer).
Each subsystem $i \in \mc V$ is described by a local state 
$x_i^k \in \real^{n_i},$ updating as: 
\begin{align}
\label{eq:plantModel_distributed}
x_i^{k+1} &=  \sum_{j \in \mc N_i} A_{ij} x_j^{k} + B_i u_i^{k} + E_i q_i, && k \in \integnneg,
\end{align}
where $\mc N_i$ denotes the set of subsystems interacting with $i,$ $u_i^k \in \real^{m_i}$ is the local control decision, 
and $A_{ij} \in \real^{n_i \times n_j}, B\in \real^{n_i \times m_i}.$
In~\eqref{eq:plantModel_distributed}, $q_i \in \real^{r_i}$
with $E_i \in \real^{n_i \times r_i},$ models a constant unknown 
disturbance acting on subsystem $i$. 
We will denote by $n:=\sum_i n_i,$  $m:=\sum_i m_i,$ and $r:=\sum_i r_i.$
Further, we assume that the system's state is observed by means of 
a global output signal:
\begin{align}\label{eq:output_y}
y^k &= \sum_{i=1}^N C_i x_i^{k} + D_i u_i^k,
\end{align}
where $C_i \in \real^{p \times n_i}$ and 
$D_i \in \real^{p \times m_i}.$

\smallskip
\begin{remark}{\bf \textit{(Output model)}}
\label{rem:output_model}
Equation~\eqref{eq:output_y} assumes the availability of a 
common, or centralized, output signal. 
Although alternative output models could be considered (e.g., where 
each $i$ measures $y_i^k = C_ix_i^k+D_i u_i^k$), the 
model~\eqref{eq:output_y} is common to many practical applications. 
For example, consider a swarm of drones where each drone communicates wirelessly with a common base station.
    \QEDB
\end{remark}

In vector form, \eqref{eq:plantModel_distributed} reads as:
\begin{align}
\label{eq:plantModel_global}
x^{k+1} &=  A x^{k} + B u^{k} + E q, \nonumber \\
y^k &= C x^{k} + D u^k,
\end{align}
where $x^k = (x_1^k, \dots, x_N^k) \in \real^n$ is the vector of states, 
$u^k = (u_1^k, \dots , u_N^k) \in \real^m$ the vector of inputs, 
$q = (q_1, \dots, q_N) \in \real^r$ the vector of disturbances,
$A=[A_{ij}],$ $B=[B_{ij}],$ $E=[E_{ij}],$ $C=[C_1, \dots, C_N],$ and $D=[D_1, \dots , D_N]$
are block matrices.
In what follows, we let $G(z) = C (zI - A)^\inv B + D $, $H(z) = C (zI - A)^\inv E$
denote the transfer functions from $u$ to $y$ and from $q$ to $y$, 
respectively. This definition is intended for the 
values of $z \in \complex$ for which the inverse is defined.
We will adopt the compact notation $G:=G(1), H:=H(1)$ to express the 
steady-state response of the system to constant inputs; formally, when 
$u_k = \bar u, q_k = \bar q$ for all $k \in \integnneg$:
\begin{align*}
\lim_{k \to \infty} y_k = G\bar u + H \bar q.
\end{align*}


\begin{assumption}{\bf\textit{(Stability and control properties of system)}}
\label{ass:stabilityPlant}
The model~\eqref{eq:plantModel_distributed} is asymptotically stable, 
controllable, and observable.
\QEDB
\end{assumption}

Thanks to Assumption~\ref{ass:stabilityPlant}, in what follows, we 
will fix a matrix $Q\succ 0$, and we let $P \succ 0$ be such that 
$A^\tsp P A - P = -Q$.
Controllability and observability are standard assumptions to guarantee 
that control problems are well-defined. Moreover, we assume that the system has been pre-stabilized as in 
Assumption~\ref{ass:stabilityPlant}. This can be achieved using 
well-established static state feedback techniques~\cite{HKK:96}. 

\subsection{Distributed structure of the controller}
\label{sec:distrib_control_structure}

We consider a system controlled by distributed controllers, each 
co-located with a local subsystem and actuating its corresponding control variable (see Fig.\ref{fig:architecture}). The combination of a subsystem and its controller is referred to as an agent. Controllers collaborate through an undirected \textit{control graph}
$\mc G=(\mc V_u, \mc E_u)$, where $\mc V_u=\mc V_x$ and $\mc E_u \subseteq \mc V_u \times \mc V_u$ (see Fig.\ref{fig:architecture}-Control Layer). 
A pair of controllers can collaboratively compute a control law only if they are connected by a link in $\mc E_u.$ Recall that a graph is connected if there exists a path between any two 
nodes.
\smallskip
\begin{assumption}{\bf\textit{(Connectivity of the control graph)}}
\label{ass:networkConnectivity}
The graph $\mc G$ is connected.
\QEDB\end{assumption}

Under Assumption~\ref{ass:networkConnectivity}, there exists a
symmetric and doubly  stochastic matrix $W =[w_{ij}] \in\real^{N \times N}$ (which will be called \textit{mixing matrix}) such that $(i,j)\not \in \mc E_u$ implies $w_{ji}=0$, and that satisfies  
$\beta <1$.

\subsection{Control objectives as an optimization problem}
We study a control problem where the ensemble of controllers seeks to 
collaboratively compute an input that solves
\begin{align}\label{eq:steady_state_optimization}
\underset{u \in \real^m}{\text{minimize}} 
~~\sum_{i=1}^N \Phi_i(u,Gu+Hq),
\end{align}
where $\Phi_i: \real^m \times \real^p \to \real.$ 
The optimization problem~\eqref{eq:steady_state_optimization} describes 
a setting where the group of controllers wants to determine a 
control input that optimizes (as measured by the cost 
$\sum_{i=1}^N \Phi_i(\cdot, \cdot)$) the system at steady-state 
(captured by the dependence of the cost on the steady-state output 
$Gu+Hq$).
Moreover, the cost in~\eqref{eq:steady_state_optimization} has a 
separable structure, allowing for cases where $\Phi_i(\cdot,\cdot)$ 
is known locally only by agent $i.$ We also remark that optimization 
problem~\eqref{eq:steady_state_optimization} is \textit{parametrized} 
by $q$; as such, its solutions cannot be computed using 
standard optimization solvers, since the disturbance $q$ is unknown 
and unmeasurable.

\smallskip
\begin{remark}{\bf \textit{(Practical relevance of~\eqref{eq:steady_state_optimization})}}
Allowing the local costs $\Phi_i(\cdot, \cdot)$ 
in~\eqref{eq:steady_state_optimization} to depend on the global system 
input $u$ and the global steady-state output $Gu+Hq$ enables our framework to model systems where agents have individual performance metrics (namely, $\Phi_i(\cdot, \cdot)$), but collectively optimize a shared objective, as in \eqref{eq:steady_state_optimization}. This applies to scenarios like energy systems, where each subsystem evaluates optimality differently. 

Moreover, notice that a special case 
of~\eqref{eq:steady_state_optimization} is:
\begin{align}\label{eq:steady_state_optimization_adjusted}
\underset{u \in \real^m}{\text{minimize}} 
~~\sum_{i=1}^N \tilde \Phi_i(u_i,Gu+Hq),
\end{align}
where $\tilde \Phi_i: \real^{m_i} \times \real^p \to \real$ now depends 
only on the local actuation variable $u_i$ (instead of the global one).
We stress that our framework is general enough to account for this 
setting as a special case. This formulation describes, for example, 
problems where the ensemble of agents would like to optimize the global 
system operation (as described by $Gu+Hq$), while minimizing the local 
control effort. 
Returning to the swarm of drones example (see 
Remark~\ref{rem:output_model}), each drone may seek to reduce its local 
power consumption, while ensuring that the entire swarm reaches a 
desired configuration, which is measured by the global $y$.
    \QEDB
\end{remark}

In the remainder, we will denote in compact form
$$\Phi(u,y)  :=  \sum_{i=1}^N \Phi_i(u,y).$$
\smallskip
\begin{assumption}{\bf\textit{(Lipschitz and convexity of the cost)}}
\label{ass:convexityLocalCost}
For all $i$,  $(u, y) \mapsto \Phi_i(u,y)$ is proper closed convex, 
lower bounded, and Lipschitz differentiable with constant $L_{\Phi_i}$. 
\QEDB\end{assumption}

Assumption~\ref{ass:convexityLocalCost} is standard in optimization. 
This assumption allows us to derive the following inequality\footnote{
The notation $\nabla \Phi(u,y)$ indicates $\nabla \Phi(u,y) = (\nabla_u \Phi(u,y), \nabla_y \Phi(u,y)) \in \real^{m + p}.$}:
\begin{align}\label{eq:lipschitz_gradient}
    \norm{\Pi^\tsp( \nabla \Phi(u,y) - \nabla \Phi(u',y'))} \leq L_\Phi \left \norm{\begin{bmatrix} u \\ y \end{bmatrix} - \begin{bmatrix} u' \\ y' \end{bmatrix} \right},
\end{align}
where $\Pi^\tsp := \begin{bmatrix} I_m &  G^\tsp \end{bmatrix},$
which holds for all $y, y' \in \real^n$, $u, u' \in \real^m$, and $L_\Phi := \norm{\Pi} \sum_i L_{\Phi_i}.$ In what follows, we denote the set of optimizers 
of~\eqref{eq:steady_state_optimization}~by
\begin{align*}
\mc A^* := \setdef{(u^*, x^*)}{(u^*, x^*) \text{ is a first-order optimizer of~\eqref{eq:steady_state_optimization}}},
\end{align*}
and we assume that this set is nonempty and closed.

\section{Design of the F-DGD Method}

\label{sec:control_design}
A centralized algorithm to solve the steady-state regulation 
problem~\eqref{eq:steady_state_optimization} has been studied 
in~\cite{GB-MV-JC-ED:24-tac} and continuous-time 
counterparts~\cite{AH-SB-GH-FD:20,GB-JIP-ED:20-automatica,GB-JC-JP-ED:21-tcns}. 
In~\cite{GB-MV-JC-ED:24-tac}, the authors propose a gradient-type controller
\begin{align}
\label{eq:ideal_controller}
u^{k+1}  = u^k -\eta \Pi^\tsp \nabla \Phi(u^k,y^k), 
\end{align}
where $\eta > 0$ denotes a scalar stepsize, being a design parameter.
The controller~\eqref{eq:ideal_controller} implements a gradient-type
iteration to solve the optimization~\eqref{eq:steady_state_optimization}, 
modified by replacing the true gradient $\Pi^\tsp \nabla \Phi (u^k,Gu^k+Hq)$ 
with a measurement-based version $\Pi^\tsp \nabla \Phi (u^k,y^k)$, which 
avoids the need to measure $q$. Unfortunately, \eqref{eq:ideal_controller} is inapplicable to our setting, since 
\begin{enumerate}
    \item[(i)] \eqref{eq:ideal_controller} does not respect the distributed 
nature of the controller considered here (cf. Section~\ref{sec:distrib_control_structure}), and 
    \item[(ii)] \eqref{eq:ideal_controller} requires centralized
knowledge of the gradients $\{\nabla \Phi_i \}_{i \in \mc V_u},$ 
which is impractical in our case since each $\Phi_i$ is assumed to be known only locally. 
\end{enumerate}
With this motivation, we propose an algorithm where each agent 
$i \in \mc V_u$ holds a local copy 
$u^k_{(i)} \in \real^{m}$ of $u^k$, and updates it as:
\begin{align}\label{eq:distributed_control_algorithm}
    u_{(i)}^{k+1} = \sum_{j=1}^N w_{ij} u_{(j)}^{k} - \eta 
    \Pi^\tsp \nabla \Phi_i (u^k_{(i)},y^k),
\end{align}
where $w_{ij}$ are the entries of matrix $W.$
According to this control law, each agent $i$ updates its local state 
$u_{(i)}$ by performing two steps: it computes a weighted average of 
its neighbors' states $\sum_{j=1}^N w_{ij} u_{(j)}^{k}$ to seek a 
consensus between the agents, and it applies 
$-\Pi^\tsp \nabla \Phi_i (u^k_{(i)},y^k)$ to seek to decrease 
$\Phi_i (u^k_{(i)},y^k).$
We remark that this control law is distributed in the sense that each agent $i$ 
requires only knowledge of the local $\nabla \Phi_i$. Note that each agent needs to know the steady-state map (i.e., $G$) and measure the global output feedback 
signal $y^k$. We call~\eqref{eq:distributed_control_algorithm} \textit{Feedback Distributed Gradient Descent (F-DGD) algorithm.}

\begin{remark}{\bf \textit{(Relationship with distributed optimization algorithm)}} 
The algorithm~\eqref{eq:distributed_control_algorithm} is inspired by the Distributed Gradient Descent (DGD) method~\cite{KY-QL-WY:16N}. While other approaches, such as EXTRA and Gradient 
Tracking~\cite{TY-XY-JW-etal:19}, are valid alternative solutions, 
their investigation is left as the topic of future works.
\QEDB\end{remark}

\section{Convergence Analysis and Error Bounds}

\label{sec:analysis}
In this section, we study the convergence of~\eqref{eq:distributed_control_algorithm} 
when applied to control the system~\eqref{eq:plantModel_global}. 
In the remainder, we employ the following notations of stacked vectors:
$u^k_{(1:N)} := (u_{(1)}^k, u_{(2)}^k, \dots, u^k_{(N)}) \in \real^{mN}$ and 
\begin{align*}
\gamma(u^k_{(1:N)}, y^k) := 
\begin{bmatrix}
\Pi^\tsp \nabla \Phi_1 (u^k_{(1)},y^k)\\    
\vdots\\
\Pi^\tsp \nabla \Phi_N (u^k_{(N)},y^k)
\end{bmatrix}
\in \real^{mN}.
\end{align*}
In vector form, the system~\eqref{eq:plantModel_global} controlled 
by~\eqref{eq:distributed_control_algorithm} reads as
\begin{subequations}
\label{eq:interconnectedSystem}
\begin{align}
\label{eq:interconnectedSystem-a}
x^{k+1} &=  A x^{k} + BSu^{k}_{(1:N)} + E q,  \\
y^k &= C x^{k} + DS u^k_{(1:N)}, \nonumber\\
\label{eq:interconnectedSystem-b}
u_{(i)}^{k+1} & = \sum_{j \in \mc N_i} w_{ij} u_{(j)}^{k} - \eta 
    \Pi^\tsp \nabla \Phi_i (u^k_{(i)},y^k), i \in \mc V_u,
\end{align}
\end{subequations}
where $S \in \real^{m \times mN}$ is given by
$$S=\text{diag}([I_{m_1}, 0, \dots ], [0, I_{m_2}, 0, \dots ], \dots 
[0, \dots, 0, I_{m_N}]).$$

\subsection{Asymptotic convergence}
The following result shows that, under a suitable choice of the 
stepsize $\eta,$ the state of~\eqref{eq:interconnectedSystem} converges 
asymptotically.

\begin{theorem}{\bf \textit{(Convergence of the state sequences)}}
\label{thm:convergence}
Let Assumptions~\ref{ass:stabilityPlant}-\ref{ass:convexityLocalCost} 
hold, $W$ be such that $\beta<1,$ and $\eta \leq \bar \eta := \min \{ \eta_1, \eta_2, \eta_3\},$ with
\begin{align}\label{eq:def_etas}
  \eta_1 = \frac{1-2\mu + \lambda_N(W)}{L_\Phi}, &&  
  \eta_2 = \frac{\mu}{\lambda_1(P)L_h^2},
\end{align}
$$\eta_3 = \frac{\mu\lambda_n(Q)}{\frac{L_\Phi^2}{4} + L_h^2(\norm{A^\tsp P}^2 +\lambda_n(Q)\lambda_1(P)) + {L_h} L_\Phi\norm{A^\tsp P}},$$
with $\mu$ an arbitrary constant, $ 0< \mu \leq 1 - \frac{{(1- \lambda_N(W)) + \eta {L_{\Phi}}}}{2}$, and $L_h = \norm{(I-A)^\inv B S}$.
Then, the sequences $x^k,$ $u^{k}_{(i)}$ generated 
by~\eqref{eq:interconnectedSystem} converge. 
\QEDB\end{theorem}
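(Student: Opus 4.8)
The plan is to build a single composite Lyapunov function for the joint state $(x^k, u^k_{(1:N)})$ and to show it decreases along~\eqref{eq:interconnectedSystem}, with the three thresholds $\eta_1,\eta_2,\eta_3$ emerging as the conditions that render each group of cross-terms nonpositive. First I would pass to error coordinates. Define the frozen-input steady state $h(u_{(1:N)}) := (I-A)^{-1}(BS u_{(1:N)} + Eq)$ and the plant tracking error $\tilde x^k := x^k - h(u^k_{(1:N)})$. Using the plant update together with $(I-A)h(u^k_{(1:N)}) = BS u^k_{(1:N)} + Eq$, the error obeys $\tilde x^{k+1} = A\tilde x^k - \big(h(u^{k+1}_{(1:N)}) - h(u^k_{(1:N)})\big)$, and by the definition of $L_h$ the driving term satisfies $\norm{h(u^{k+1}_{(1:N)}) - h(u^k_{(1:N)})} \le L_h \norm{u^{k+1}_{(1:N)} - u^k_{(1:N)}}$. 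The second identity I would record is $y^k = G(Su^k_{(1:N)}) + Hq + C\tilde x^k$: the measured output equals the steady-state output plus a term linear in $\tilde x^k$. This is the crucial bookkeeping step, since it quantifies exactly how inexact the feedback gradient in~\eqref{eq:distributed_control_algorithm} is, namely by an amount controlled by $\norm{\tilde x^k}$ through the Lipschitz bound~\eqref{eq:lipschitz_gradient}.

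Next I would introduce the candidate $V^k = V_{\mathrm{ctrl}}^k + \mu\,(\tilde x^k)^\tsp P\,\tilde x^k$, where $V_{\mathrm{ctrl}}^k$ is a DGD-type Lyapunov function that aggregates the optimization progress and the consensus disagreement of the copies $u^k_{(i)}$ around their average $\bar u^k := \tfrac1N\sum_i u^k_{(i)}$, and $\mu$ is precisely the free weight appearing in the statement. The weight $\mu$ trades off the plant-tracking penalty against the controller penalty; its admissible range $0 < \mu \le 1 - \tfrac{(1-\lambda_N(W)) + \eta L_\Phi}{2}$ is exactly what keeps the quadratic form defining $V_{\mathrm{ctrl}}^k$ positive semidefinite once the gradient step is folded into the consensus contraction.

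The heart of the proof is the one-step estimate of $V^{k+1} - V^k$. The plant term contributes $\mu\big((\tilde x^{k+1})^\tsp P \tilde x^{k+1} - (\tilde x^k)^\tsp P \tilde x^k\big)$, which by the error dynamics and $A^\tsp P A - P = -Q$ yields $-\mu\lambda_n(Q)\norm{\tilde x^k}^2$, a cross-term $-2\mu(\tilde x^k)^\tsp A^\tsp P\big(h(u^{k+1}_{(1:N)})-h(u^k_{(1:N)})\big)$, and a quadratic $\mu\big\|h(u^{k+1}_{(1:N)})-h(u^k_{(1:N)})\big\|_P^2 \le \mu\lambda_1(P)L_h^2\norm{u^{k+1}_{(1:N)}-u^k_{(1:N)}}^2$. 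The controller term produces the usual DGD decrease --- a consensus contraction governed by $\lambda_N(W)$ together with a negative multiple of the aggregated gradient --- minus a coupling term in which the gradient inexactness meets $C\tilde x^k$ via the identity of the first paragraph. I would then split every cross-term with Young's inequality and collect the coefficients of the three nonnegative quantities $\norm{\tilde x^k}^2$, the consensus disagreement $\sum_i\norm{u^k_{(i)}-\bar u^k}^2$, and $\norm{u^{k+1}_{(1:N)}-u^k_{(1:N)}}^2$. Requiring each coefficient to be nonpositive gives three scalar inequalities in $\eta$: the controller/consensus coefficient produces $\eta \le \eta_1$ (explaining the numerator $1-2\mu+\lambda_N(W)$), the plant-input coefficient produces $\eta \le \eta_2 = \mu/(\lambda_1(P)L_h^2)$, and the fully coupled $\lambda_n(Q)$-coefficient --- whose denominator gathers $L_\Phi$, $L_h$, and $\norm{A^\tsp P}$ exactly as written --- produces $\eta \le \eta_3$. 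Hence for $\eta \le \bar\eta$ we obtain $V^{k+1} \le V^k - \rho\big(\norm{\tilde x^k}^2 + \sum_i\norm{u^k_{(i)}-\bar u^k}^2 + \norm{u^{k+1}_{(1:N)}-u^k_{(1:N)}}^2\big)$ for some $\rho>0$.

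Since $V^k \ge 0$ is bounded below, telescoping shows the residuals are summable, so $\tilde x^k \to 0$, the consensus disagreement $\to 0$, and $u^{k+1}_{(1:N)}-u^k_{(1:N)} \to 0$; in particular $V^k$ converges and the iterates are bounded. To upgrade vanishing increments to convergence of the sequences themselves I would exploit convexity: on the average variable the update reduces, up to the vanishing error $C\tilde x^k$, to an inexact gradient step, so the joint iteration acts as an averaged (firmly nonexpansive) operator in the metric weighted by $P$ and $\mu$, whose fixed-point set is the nonempty closed set $\mc A^*$. Fejér monotonicity up to a summable perturbation, together with the vanishing residuals, then yields via Opial's lemma convergence of $u^k_{(i)}$ to a common limit, and $\tilde x^k \to 0$ propagates this to $x^k$. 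I expect the main obstacle to be the bookkeeping in the third paragraph: isolating $V_{\mathrm{ctrl}}^k$ and the weight $\mu$ so that the many cross-terms --- especially the interconnection term coupling gradient inexactness to the plant error --- collapse to exactly the three stated thresholds rather than to looser sufficient conditions.
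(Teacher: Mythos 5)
Your overall architecture---the error coordinates $\tilde x^k = x^k - h(u^k_{(1:N)})$, the identity $y^k = GSu^k_{(1:N)} + Hq + C\tilde x^k$ quantifying gradient inexactness, a composite DGD-type Lyapunov function, Young's inequality on cross-terms, and stepsize thresholds from sign conditions on the collected coefficients---is essentially the paper's proof. But the way you close the argument contains a genuine error. Your claimed decrement $V^{k+1} \le V^k - \rho\big(\norm{\tilde x^k}^2 + \sum_i\norm{u^k_{(i)}-\bar u^k}^2 + \norm{u^{k+1}_{(1:N)}-u^k_{(1:N)}}^2\big)$ cannot hold with $\rho>0$: the interconnected system has fixed points at which $\tilde x = 0$ and $u^{k+1}_{(1:N)} = u^k_{(1:N)}$ while the consensus disagreement is strictly positive, since a fixed point of constant-stepsize DGD satisfies $((I-W)\otimes I)\,u_{(1:N)} = -\eta\,\gamma \neq 0$ in general; at such a point your $V$ would be forced to decrease strictly forever, a contradiction. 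The paper's decrement is a quadratic form in only \emph{two} residuals, $\norm{F_c(u^k_{(1:N)},\tilde x^k)} = \norm{u^{k+1}_{(1:N)}-u^k_{(1:N)}}$ (cf.~\eqref{eq:controller_reformulation}) and $\norm{\tilde x^k}$; the disagreement lives \emph{inside} the Lyapunov function $V_u$, not in its decrement. Consequently your downstream claims---that the disagreement vanishes and that the fixed-point set of the iteration is $\mc A^*$---are both false: the iterates converge to zeros of $F_c$, i.e.\ minimizers of the penalized function $V_u$, which generically differ from optimizers of~\eqref{eq:steady_state_optimization}. This inexactness is exactly what Theorem~\ref{thm:error_bound} later quantifies through the $\eta\sigma/(1-\beta)$ term, so your Fejér/Opial step targets the wrong limit set; the paper instead concludes with La Salle's invariance principle applied to the storage function $U$, whose minimum is attained at $(\norm{F_c},\norm{\tilde x})=(0,0)$.

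A second, more minor, misstep is the role you assign to $\mu$. In the paper $\mu$ is \emph{not} the weight of the plant term in the composite function (that weight is a separate parameter $d$, ultimately set to $1/2$, which produces $\eta_2$ and $\eta_3$); rather, $\mu$ is a lower bound on the descent coefficient $1 - L_{V_u}/2$ of the DGD Lyapunov function, where $L_{V_u} \le (1-\lambda_N(W)) + \eta L_\Phi$. The constraint $0 < \mu \le 1 - \frac{(1-\lambda_N(W))+\eta L_\Phi}{2}$ is algebraically equivalent to $\eta \le \eta_1$, which is how the first threshold arises---not, as you suggest, from keeping the quadratic part of $V_{\mathrm{ctrl}}$ positive semidefinite (that holds for free from $\lambda_1(W)=1$). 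With $\mu$ placed as a composite weight, the three thresholds would not come out in the stated form.
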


\begin{proof}
We will prove this claim by using La Salle's Invariance 
Principle~\cite[Thm 4.4]{HKK:96}. For clarity of presentation, the proof is organized 
into {four} main steps.

\textit{1) Change of variables and storage function.}
Let $h(u) = (I-A)^\inv B S u { + } (I-A)^\inv E q,$ and consider the 
new coordinate $\tilde x^k = x^k - h(u^k_{(1:N)}),$ which shifts the 
equilibrium point of~\eqref{eq:interconnectedSystem-a} to the origin. 
Inspired by singular-perturbation reasonings~\cite[Sec.~11]{HKK:96}, 
consider the storage function:
\begin{align}\label{eq:LyapunovFunctionDef}
U(u_{(1:N)}, \tilde x) := \frac{d}{\eta} V_u(u_{(1:N)}) + (1-d) V_x(\tilde x),
\end{align}
where $\tilde x \in \real^n, u_{(1:N)} \in \real^{mN},$ and 
$d \in (0,1)$. We let
\begin{align}\label{eq:Vu_Vx_definition}
V_u(u_{(1:N)}) &= -\frac{1}{2} \sum_{i,j=1}^N w_{ij} u_{(i)}^\tsp {u_{(j)}} \nonumber \\ 
&\quad + \sum_{i=1}^N \left( \frac{1}{2} \norm{u_{(i)}}^2 + \eta \Phi_i(u_{(i)},Gu_{(i)}+Hq) \right), \nonumber\\
V_x(\tilde x) &= \tilde x^\tsp P \tilde x. 
\end{align}
Notice that $V_u$ is Lipschitz differentiable with constant 
$L_{V_u} {\leq} (1- \lambda_N(W)) + \eta {L_{\Phi}}$ and 
it is convex (since all $\Phi_i$ are convex and 
$\sum_{i=1}^N  \norm{u_{(i)}}^2  - \sum_{i,j=1}^N w_{ij} u_{(i)}^\tsp u_{(j)}$ is also convex due to $\lambda_1(W)=1$).
Next, we introduce the quantity
\begin{align}\label{eq:Vu}
& \tilde V_u(u_{(1:N)},\tilde x) = -\frac{1}{2} \sum_{i,j=1}^N w_{ij} u_{(i)}^\tsp {u_{(j)}} \\ 
&\quad + \sum_{i=1}^N \left( \frac{1}{2} \norm{u_{(i)}}^2 + \eta \Phi_i(u_{(i)},C \tilde x+Gu_{(i)}+Hq) \right), \nonumber
\end{align}
and $F_c(u_{(1:N)},\tilde x) :=  
\begin{bmatrix}
        \nabla_{u_{(1)}} \tilde V_u(u_{(1:N)},\tilde x)\\  
        \vdots\\
        \nabla_{u_{(N)}} \tilde V_u(u_{(1:N)},\tilde x)
\end{bmatrix}.$
With this notation, \eqref{eq:interconnectedSystem-b} and~\eqref{eq:Vu} 
can be re-expressed as:
\begin{align}\label{eq:controller_reformulation}
u^{k+1}_{(1:N)} &= u^{k}_{(1:N)} - F_c(u_{(1:N)}^k,\tilde x^k),\nonumber\\
V_u(u_{(1:N)}) &=\tilde  V_u(u_{(1:N)}, 0),
\end{align}
using $y^k = C(\tilde x^k+h(u_{(1:N)}^k))+{DSu^k_{(1:N)}}.$

\textit{2) Bounding the variation of $V_u(\cdot).$}
We have:
\begin{align}\label{eq:bound_Vu}
&V_u(u_{(1:N)}^{k+1}) \overset{(a)}{\leq} V_u(u_{(1:N)}^k) + \nabla V_u(u_{(1:N)}^k)^\tsp
(u_{(1:N)}^{k+1}-u_{(1:N)}^{k})\nonumber\\
& \quad\quad\quad\quad\quad\quad + \frac{L_{V_u}}{2} \norm{u_{(1:N)}^{k+1}-u_{(1:N)}^{k}}^2\nonumber\\
&\overset{(b)}{=} 
V_u(u_{(1:N)}^k) {-} \nabla V_u(u_{(1:N)}^k)^\tsp
F_c(u_{(1:N)}^k,\tilde x^k)\nonumber\\
& \quad\quad + \frac{L_{V_u}}{2} \norm{F_c(u_{(1:N)}^k,\tilde x^k)}^2\nonumber\\
&\overset{(c)}{\leq}
V_u(u_{(1:N)}^k) - \norm{F_c(u_{(1:N)}^k,\tilde x^k)}^2 + \frac{L_{V_u}}{2} \norm{F_c(u_{(1:N)}^k,\tilde x^k)}^2 \nonumber\\
& \quad\quad + \norm{F_c(u_{(1:N)}^k,\tilde x^k)} \norm{ \nabla V_u(u_{(1:N)}^k)^\tsp - 
F_c(u_{(1:N)}^k,\tilde x^k)} \nonumber\\
%
%
&\overset{(d)}{\leq} 
V_u(u_{(1:N)}^k) 
- {\mu} \norm{F_c(u_{(1:N)}^k,x^k)}^2  \nonumber\\
&\quad\quad + \eta L_\Phi \norm{F_c(u_{(1:N)}^k,x^k)} \norm{ \tilde x^k}
\end{align}
where $(a)$ follows from the convexity and Lipschitz differentiability of 
$V_u,$ $(b)$ follows from of~\eqref{eq:controller_reformulation}, 
$(c)$ follows from adding and subtracting $F_c(u_{(1:N)}^k, \tilde x^k)$ 
to 
$\nabla V_u(u_{(1:N)}^k)$ and using the triangle inequality,
and $(d)$ follows from Lipschitz differentiability of $\Phi$ and definition of 
$\mu.$

\textit{3) Bounding the variation of $V_x(\cdot).$}
In the variables $\tilde x^k,$ the plant 
dynamics~\eqref{eq:interconnectedSystem-a} read as $\tilde x^{k+1} = A \tilde x^k + h(u^k_{(1:N)}) - h(u^{k+1}_{(1:N)}).$
We then have
\begin{align}\label{eq:bound_Vx}
& V_x(\tilde x^{k+1}) = (\tilde x^k)^\tsp A^\tsp P A \tilde x^k \nonumber\\
&\quad\quad +2 (\tilde x^k)^\tsp A^\tsp P (h(u^k_{(1:N)}) - h(u^{k+1}_{(1:N)})) \nonumber\\
 &\quad\quad  + (h(u^k_{(1:N)}) - h(u^{k+1}_{(1:N)}))^\tsp P (h(u^k_{(1:N)}) - h(u^{k+1}_{(1:N)}))\nonumber\\
&\overset{(a)}{\leq} 
V_x(\tilde x^k) 
- (\tilde x^k)^\tsp Q \tilde x^k 
+ \lambda_1(P) \norm{h(u^k_{(1:N)}) - h(u^{k+1}_{(1:N)})}^2 \nonumber\\
&\quad\quad + 2 \norm{A^\tsp P} \norm{\tilde x^k} \norm{h(u^k_{(1:N)}) - h(u^{k+1}_{(1:N)})}\nonumber\\
&\overset{(b)}{\leq}
V_x(\tilde x^k) 
- {\lambda_n(Q)}\norm{\tilde x^k}^2 
+ \lambda_1(P) L_h^2 \norm{u^k_{(1:N)} - u^{k+1}_{(1:N)}}^2 \nonumber\\
&\quad\quad  + 2 L_h \norm{A^\tsp P} \norm{\tilde x^k} \norm{u^k_{(1:N)} - u^{k+1}_{(1:N)}}\nonumber\\
&\overset{(c)}{=}
V_x(\tilde x^k) 
- {\lambda_n(Q)}\norm{\tilde x^k}^2 
+ \lambda_1(P) L_h^2 \norm{F_c(u_{(1:N)}^k,x^k)}^2 \nonumber\\
&\quad\quad  + 2 L_h \norm{A^\tsp P} \norm{\tilde x^k} \norm{F_c(u_{(1:N)}^k,x^k)},
\end{align}
where $(a)$ follows from Assumption~\ref{ass:stabilityPlant},
$(b)$ from Lipschitz continuity of $h,$
and $(c)$ by applying~\eqref{eq:controller_reformulation}.

\textit{4) Combining the bounds.}
By combining~\eqref{eq:bound_Vu} and \eqref{eq:bound_Vx}, we conclude
$
U(u_{(1:N)}^{k+1}, \tilde x^{k+1}) -
U(u_{(1:N)}^{k}, \tilde x^{k})  \leq - \xi(u_{(1:N)}^{k}, \tilde x^{k})^\tsp \Lambda \xi(u_{(1:N)}^{k}, \tilde x^{k}),
$
where
$$\xi(u_{(1:N)}^{k}, \tilde x^{k}) = \begin{bmatrix}
\norm{F_c(u_{(1:N)}^k, \tilde x^k)}\\ \norm{\tilde x^k} 
\end{bmatrix}$$
and 
\begin{align*}
\Lambda  = \begin{bmatrix}
d \frac{\mu}{\eta} - (1-d)\lambda_1(P) L_h^2
& \alpha\\
 \alpha & (1-d) \lambda_n(Q)
\end{bmatrix},
\end{align*}
where $\alpha := -\frac{1}{2} (d L_\Phi + 2 (1-d) L_h \norm{A^\tsp P}).$
It is guaranteed that $\Lambda \succ 0$ when $\eta \in (0, \bar \eta), \bar \eta = \min \{\bar \eta_1, \bar \eta_2\}$, where
\begin{align*}
\bar \eta_1 &= \frac{d\mu}{(1-d)\lambda_1(P)L_h^2},\\ \nonumber
\bar \eta_2 &= \frac{d(1-d)\mu\lambda_n(Q)}{\frac{
d^2 L_\Phi^2}{4}+(1-d)^2 L_h^2\rho +d(1-d)L_hL_\Phi\norm{A^\tsp P}},
\end{align*}
with $\rho:= \norm{A^\tsp P}^2 +\lambda_n(Q)\lambda_1(P).$
By making the following choice for the free variable $d:$
$$d=\frac{L_h^2(\norm{A^\tsp P}^2+\lambda_n(Q)\lambda_1(P))-\frac{L_\Phi L_h}{2}\sqrt{\rho}}{L_h^2(\norm{A^\tsp P}^2+\lambda_n(Q)\lambda_1(P))-\frac{L_\Phi^2}{4}},$$ we obtain the largest value of $\bar \eta_3$ that guarantees decrease of the 
storage function. For the sake of simplicity, we choose $d = 0.5$, yielding the choice $\eta_2$ and $\eta_3$ in~\eqref{eq:def_etas}. 
To conclude, since $U(u_{(1:N)}^{k}, \tilde x^{k})$ strongly decreases along 
the trajectories of~\eqref{eq:interconnectedSystem} and its minimum is the 
point $(\norm{F_c(u_{(1:N)}^k, \tilde x^k)}, \norm{\tilde x^k})=(0,0),$ by La 
Salle's invariance principle the claim follows.~
\end{proof}

Theorem~\ref{thm:convergence} shows that under a sufficiently small 
choice of the stepsize $\eta,$ the two sequences $x^k$ and 
$u^{k}_{(i)}$, describing the system's and controller's states, 
converge asymptotically to a fixed point, respectively. Notice that this convergence claim is not straightforward, since the 
proposed controller~\eqref{eq:distributed_control_algorithm} 
incorporates two simultaneous steps, a consensus one and a gradient 
step. As such, this update may oscillate or fail to converge when 
$\eta$ is chosen inadequately. The upper bounds 
$\eta_1, \eta_2, \eta_3$ depend on the various parameters of the 
system~\eqref{eq:plantModel_global} and 
optimization problem~\eqref{eq:steady_state_optimization}; also, observe that the imposed bounds on $\mu$ guarantees that 
$\bar \eta>0$.
Finally, we notice that a sufficiently small choice for $\eta$ also guarantees that there a choice of $\mu$ that satisfies 
$ 0< \mu \leq 1 - \frac{{(1- \lambda_N(W)) + \eta {L_{\Phi}}}}{2}$ (e.g., $\eta L_\Phi < 1 + \lambda_N(W)$).

Before proceeding, we present an instrumental result that will be used in the 
remainder. Based on the definition of $u_{(1:N)}^k$ and 
$\gamma (u^k_{(1:N)}, y^k)$, we 
rewrite~\eqref{eq:distributed_control_algorithm} as:
\begin{align}\label{eq:update_u_gamma}
u_{(1:N)}^{k+1} = (W \otimes I) u_{(1:N)}^k - \eta \, \gamma (u^k_{(1:N)}, y^k),
\end{align}
where $\otimes$ denotes the Kronecker product.

\begin{proposition}{\bf \textit{(Bounded gradient)}}
\label{prop:bounded_gradient}
Let the assumptions of Theorem~\ref{thm:convergence} hold. Moreover, assume 
that the initial conditions of the controller satisfy 
$u^0_{(i)}=0, \ \forall i \in \mc V.$ Then, for all $k \in \integnneg,$ \eqref{eq:update_u_gamma} satisfies:
\begin{align}
\label{eq:gradBounds}
\norm{\gamma(u^k_{(1:N)}, y^k)} \leq \sigma,
\end{align}
where $\sigma := \sqrt{
2L_{\Phi} \left( \sum_{i=1}^N \Phi_i(u^0_{(i)}, y^0)  - \sps{\Phi}{opt} \right)}$
Here, 
$\Phi^\text{opt} = \sum_{i=1}^N \Phi_i^\text{opt},$ with $\Phi_i^\text{opt} = \Phi_i(u_{(i)}^\text{opt}, y^\text{opt})$  and $(u_{(i)}^\text{opt}, y^\text{opt}) = \arg \min_{u, y} \Phi_i(u, y)$.
\end{proposition}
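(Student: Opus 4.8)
The plan is to combine a descent-lemma (gradient-domination) bound on $\gamma$ with a monotonicity argument along the closed-loop trajectory. First I would exploit Assumption~\ref{ass:convexityLocalCost}: each $\Phi_i$ is convex, $L_{\Phi_i}$-Lipschitz differentiable, and bounded below, with $\Phi_i^\text{opt} = \min_{u,y}\Phi_i(u,y)$. The standard descent lemma then gives, for every $(u,y)$,
\begin{align*}
\norm{\nabla\Phi_i(u,y)}^2 \le 2 L_{\Phi_i}\left(\Phi_i(u,y) - \Phi_i^\text{opt}\right).
\end{align*}
Bounding $\norm{\Pi^\tsp\nabla\Phi_i(u,y)} \le \norm{\Pi}\,\norm{\nabla\Phi_i(u,y)}$, summing over $i$, and using both the definition $L_\Phi = \norm{\Pi}\sum_i L_{\Phi_i}$ and the elementary inequality $\sum_i c_i a_i \le (\sum_i c_i)(\sum_i a_i)$ for nonnegative $c_i,a_i$, I arrive at the key estimate
\begin{align*}
\norm{\gamma(u^k_{(1:N)},y^k)}^2 \le 2 L_\Phi \sum_{i=1}^N \left(\Phi_i(u^k_{(i)},y^k) - \Phi_i^\text{opt}\right).
\end{align*}

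Second, I would show the right-hand side never exceeds its value at $k=0$, which reduces the claim to proving that the aggregate cost $\sum_i\Phi_i(u^k_{(i)},y^k)$ does not grow along the trajectory. The consensus initialization $u^0_{(i)}=0$ fixes the reference value $\sum_i\Phi_i(0,y^0)$ and places the local copies in exact agreement at $k=0$. To obtain the per-step bound I would mirror Steps~2--3 of the proof of Theorem~\ref{thm:convergence}: the gradient term in~\eqref{eq:update_u_gamma} contributes a negative $-\eta\norm{\gamma}^2$-type term, while multiplication by the doubly stochastic $W\otimes I$ (with $\lambda_1(W)=1$) cannot increase the quadratic consensus penalty, so that for $\eta\le\bar\eta$ the relevant cost is non-increasing.

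The hard part is precisely this monotonicity step, and it stems from the dynamic coupling: the measured output $y^k$ is the plant's transient response and does not coincide with the consistent steady-state output $Gu^k_{(i)}+Hq$ used in the storage function of Theorem~\ref{thm:convergence}. Consequently $\gamma(u^k_{(1:N)},y^k)$ is only an inexact gradient of the reduced cost, and I must control the mismatch between the measured-output cost and its consistent counterpart. I would do this in the shifted coordinate $\tilde x^k = x^k - h(u^k_{(1:N)})$, carrying the cross-term $\eta L_\Phi\norm{\gamma}\,\norm{\tilde x^k}$ exactly as in~\eqref{eq:bound_Vu} and absorbing it through the joint storage function $U$, whose non-increase established in Theorem~\ref{thm:convergence} confines $(u^k_{(1:N)},\tilde x^k)$ to a sublevel set and drives $\tilde x^k\to0$. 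Once $\sum_i\Phi_i(u^k_{(i)},y^k)$ is shown to remain below its $k=0$ value, substituting into the key estimate and taking square roots gives $\norm{\gamma(u^k_{(1:N)},y^k)}\le\sigma$.
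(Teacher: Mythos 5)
Your first step reproduces the paper's own key estimate: the paper likewise invokes the convexity/smoothness inequality $\norm{\nabla g(u,y)}^2 \le 2L_g\left(g(u,y)-g^*\right)$ per agent and sums, exactly as in \eqref{eq:proofThm1_2}, reducing everything to showing $\sum_{i}\Phi_i(u^k_{(i)},y^k)\le \sum_i\Phi_i(u^0_{(i)},y^0)$ for all $k$. You also correctly identify that this monotonicity is the hard part. However, your mechanism for it --- carrying the cross-term $\eta L_\Phi\norm{\gamma}\norm{\tilde x^k}$ as in \eqref{eq:bound_Vu} and ``absorbing it through the joint storage function $U$'' of Theorem~\ref{thm:convergence} --- has a genuine gap, for three concrete reasons. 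First, $U$ is built from $V_u$ in \eqref{eq:Vu_Vx_definition}, which evaluates each cost at the \emph{consistent steady-state output} $Gu_{(i)}+Hq$, not at the measured output $y^k$; a sublevel set of $U$ therefore bounds $\sum_i\Phi_i(u^k_{(i)},Gu^k_{(i)}+Hq)$ plus plant-transient energy, not the quantity $\sum_i\Phi_i(u^k_{(i)},y^k)$ your key estimate requires, and converting one into the other needs a bound on $\nabla_y\Phi_i$ along the trajectory --- the very object you are trying to bound --- or introduces additional mismatch terms. Second, even granting such a conversion, non-increase of $U$ only yields $\frac{d}{\eta}V_u(u^k_{(1:N)})\le U(u^0_{(1:N)},\tilde x^0)$, which carries the additive contribution $(1-d)\,(\tilde x^0)^\tsp P\,\tilde x^0$ from the initial plant transient; the constant you would obtain is strictly larger than the stated $\sigma$, which depends only on the initial cost gap. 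Third, ``drives $\tilde x^k\to 0$'' is an asymptotic statement and cannot produce the uniform, for-all-$k\in\integnneg$ bound claimed in the proposition.

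The idea you are missing is the paper's introduction of $\tilde V_u(u_{(1:N)},\tilde x)$ in \eqref{eq:Vu}: the DGD-type storage function in which the costs are evaluated at $C\tilde x + Gu_{(i)}+Hq$, i.e., at the \emph{measured} output along the closed loop. Its crucial property is that the F-DGD update is an \emph{exact} gradient step on $u\mapsto \tilde V_u(u,\tilde x^k)$, since by \eqref{eq:controller_reformulation} one has $u^{k+1}_{(1:N)}=u^k_{(1:N)}-F_c(u^k_{(1:N)},\tilde x^k)$ with $F_c=\nabla_u \tilde V_u$; hence, unlike for $V_u$, there is no gradient mismatch and no cross-term to absorb, and $\tilde V_u$ is non-increasing along the trajectory whenever $\eta\le\bar\eta$ (this is step $(b)$ of \eqref{eq:proofThm1_1}, obtained by rerunning \eqref{eq:bound_Vu} on $\tilde V_u$). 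The proposition then follows by a sandwich that needs no sublevel set of $U$ at all: the nonnegative consensus quadratic $\frac12\bigl(\sum_i\norm{u_{(i)}}^2-\sum_{i,j}w_{ij}u_{(i)}^\tsp u_{(j)}\bigr)$ is dropped at time $k$, and it vanishes identically at $k=0$ because $u^0_{(1:N)}=0$, giving $\sum_i\Phi_i(u^k_{(i)},y^k)\le \eta^{-1}\tilde V_u(u^k_{(1:N)},\tilde x^k)\le \eta^{-1}\tilde V_u(u^0_{(1:N)},\tilde x^0)=\sum_i\Phi_i(u^0_{(i)},y^0)$, with no $\tilde x^0$ contribution. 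Your ingredients (zero initialization, consensus penalty, shifted coordinates) are the right ones, but they must be assembled around $\tilde V_u$ rather than around the joint storage function $U$; as written, your argument does not deliver the stated $\sigma$.
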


\begin{proof}
To prove~\eqref{eq:gradBounds}, we rewrite
\begin{align}\label{eq:proofThm1_1}
\sum_{i=1}^N \Phi_i(u_{(i)}^k, y^k) 
&\overset{(a)}{\leq} \eta^{-1} \tilde V(u_{(1:N)}^k, \tilde x^k)\\
&\overset{(b)}{\leq} \eta^{-1} \tilde V(u_{(1:N)}^0, \tilde x^0)
\overset{(c)}{=} \sum_{i=1}^N \Phi_i(u_{(i)}^0, y^0) \nonumber
\end{align}
where $(a)$ follows from~\eqref{eq:Vu} and by using  
$\sum_{i=1}^N  \norm{u_{(i)}}^2  - \sum_{i,j=1}^N w_{ij} u_{(i)}^\tsp u_{(j)} \geq 0$ (which holds since $\beta<1$), $(b)$ 
holds from $\tilde V_u(u_{(1:N)}^{k+1}, \tilde x^{k+1}) \leq \tilde  V_u(u_{(1:N)}^k, \tilde x^k)$ (which follows by iterating the steps 
in~\eqref{eq:bound_Vu} applied to $\tilde V_u(u_{(1:N)}^{k+1}, \tilde x^{k+1})$ instead of $V_u(u_{(1:N)}^{k+1})$), and 
$(c)$ follows from~\eqref{eq:Vu_Vx_definition} using $u^0_{(1:N)}=0$.

Moreover, recall that for any differentiable convex function $g$ with 
minimizer $u^*, y^*$, and Lipschitz constant $L_{g}$, we have $g(u_a, y_a) \geq g(u_b, y_b) + \nabla g^\tsp(u_b, y_b) \begin{bmatrix}
    u_a - u_b & y_a - y_b
\end{bmatrix}^\tsp + \frac{1}{2L_{g}} \norm{\nabla g(u_a, y_a) - \nabla g(u_b, y_b)}^2$ and $\nabla g(u^*, y^*) = 0$. Then, $\norm {\nabla g(u, y)}^2 \leq 2L_{g}(g(u, y) - g^*)$, where $g^* := g(u^*, y^*)$. Using this inequality and \eqref{eq:proofThm1_1}, we 
obtain
\begin{align}\label{eq:proofThm1_2}
    & \norm{\gamma(u^k_{(1:N)}, y^k)}^2 = \sum_{i=1}^N \norm{\Pi^\tsp \nabla \Phi_i(u_{(i)}^k, y^k)}^2  \\ \nonumber
    &\quad\quad\leq \sum_{i=1}^N 2L_{\Phi}(\Phi_i(u_{(i)}^k, y^k) - \Phi_i^o) \nonumber \\ \nonumber
    &\quad\quad\leq 2L_{\Phi} \left( \sum_{i=1}^N \Phi_i(u^0_{(i)}, y^0)  
    - \sps{\Phi}{opt} \right),
\end{align}
where $\Phi^\text{opt} = \sum_{i=1}^N \Phi_i^\text{opt},$ with $\Phi_i^\text{opt} = \Phi_i(u_{(i)}^\text{opt}, y^\text{opt})$  and $(u_{(i)}^\text{opt}, y^\text{opt}) = \arg \min_{u, y} \Phi_i(u, y)$. Note that $u_{(i)}^\text{opt}, y^\text{opt}$ exist {because of Assumption \ref{ass:convexityLocalCost} and \ref{ass:networkConnectivity}.}

\end{proof}

Proposition~\ref{prop:bounded_gradient} ensures that the sequence of gradients
$\gamma(u^k_{(1:N)}, y^k)$ is uniformly bounded. This result will be key in 
the subsequent section when characterizing the control error. Interestingly, 
unlike~\cite{JD-AA-MW:11,DJ-JX-JM:14,AN-AO:09} that assume bounded gradient, 
in our analysis our choice of stepsize ensures that gradient remains bounded.
We conclude by noting that when the initial conditions $u^0_{(1:N)}$ are 
nonzero, a uniform bound of the form~\eqref{eq:gradBounds} can still be proven, $\sigma$ needs to be modified to account for additional error terms. 


\subsection{Control error bounds}\label{se}
While Theorem~\ref{thm:convergence} certifies that the states sequences 
converge asymptotically, it remains to quantify explicitly the controller 
performance. This is the focus of this section. In line with~\cite{KY-QL-WY:16N}, to establish a linear 
rate of convergence, we will restrict our focus on cost functions that are 
restricted strongly convex; recall that $f: \operatorname{dom} f \rightarrow \real$ is restricted strongly convex~\cite{ML-WY:13} with modulus $\nu_f$ if 
\begin{align}\label{def:restricted_strongly_cvx}
(\nabla f(z)-\nabla f\left(z^*\right))^\tsp (z-z^*) \geq \nu_f \left\|z-z^*\right\|^2,
\end{align}
for all $z \in \operatorname{dom} f,$ where $z^*$ is such that 
$\nabla f\left(z^*\right)=0$. 
%
The following result is instrumental. 

\begin{lemma}{\cite[Lemma 6]{ML-WY:13}}
\label{lem:lemma6}
    Suppose that $f$ is restricted strongly convex with modulus $\nu_f$ and $\nabla f$ is Lipschitz continuous with constant $L_f$. Then, 
    \begin{align}\label{eq:c1_c2_inequality}
        &(z - z^*)^\tsp(\nabla f(z) - \nabla f(z^*) ) \\ \nonumber
        &\quad\quad\quad\quad\quad\geq c_1 \norm{\nabla f(z) - \nabla f(z^*)}^2 + c_2 \norm{z - z^*}^2, \nonumber
    \end{align}
    where $z^*$ is as in~\eqref{def:restricted_strongly_cvx}. 
Moreover, for any $\theta \in [0, 1],$ we have $c_1 = \frac{\theta}{L_f}$ and $c_2 = (1 - \theta)\nu_f.$
\end{lemma}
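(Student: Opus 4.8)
The plan is to obtain the claimed bound as a convex combination of two elementary lower bounds on the same scalar quantity, weighted by $\theta$ and $1-\theta$. Writing $g := \nabla f(z) - \nabla f(z^*)$ and $s := z - z^*$, the target reads $s^\tsp g \geq c_1 \norm{g}^2 + c_2 \norm{s}^2$. I would isolate one bound that contributes the $\norm{s}^2$ term and one that contributes the $\norm{g}^2$ term, then blend them so that the coefficients of $s^\tsp g$ sum to one.

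First I would record the bound coming directly from the hypothesis. By the definition of restricted strong convexity~\eqref{def:restricted_strongly_cvx} (with $z^*$ the stationary point, $\nabla f(z^*)=0$), one immediately has $s^\tsp g \geq \nu_f \norm{s}^2$. This supplies the $\norm{s}^2$ term with coefficient $\nu_f$ and is all I need from restricted strong convexity.

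Second, I would invoke the co-coercivity of the gradient, $s^\tsp g \geq \tfrac{1}{L_f}\norm{g}^2$. This is the Baillon--Haddad inequality: for a convex function whose gradient is $L_f$-Lipschitz, the gradient map is $\tfrac{1}{L_f}$-co-coercive. In the present setting $f$ is convex by Assumption~\ref{ass:convexityLocalCost}, so the property applies. Finally, for any $\theta \in [0,1]$ I would multiply the co-coercivity bound by $\theta$ and the restricted-strong-convexity bound by $1-\theta$ (both nonnegative factors, so the inequalities are preserved) and add them; since the coefficients multiplying $s^\tsp g$ sum to $1$, the left-hand side remains $s^\tsp g$, while the right-hand side becomes $\tfrac{\theta}{L_f}\norm{g}^2 + (1-\theta)\nu_f\norm{s}^2$, which is exactly the claim with $c_1 = \theta/L_f$ and $c_2 = (1-\theta)\nu_f$.

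The hard part will be the co-coercivity step: it is a property of convex functions and does \emph{not} follow from restricted strong convexity and Lipschitz continuity alone, so it is essential that ambient convexity be in force. The derivation itself is standard and goes through the descent lemma applied to the shifted function $\phi(w):=f(w)-\nabla f(z^*)^\tsp w$, whose gradient is $\nabla f(w)-\nabla f(z^*)$ and which is minimized at $z^*$; evaluating the descent inequality at $z-\tfrac{1}{L_f}\nabla\phi(z)$, using minimality of $\phi$ at $z^*$, and symmetrizing by exchanging the roles of $z$ and $z^*$ yields the $\tfrac{1}{L_f}\norm{g}^2$ bound. Once this inequality is available, the remainder is a one-line convex combination.
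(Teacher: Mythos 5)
Your proof is correct and is essentially the standard argument behind the cited result: the paper itself offers no proof of Lemma~\ref{lem:lemma6}, deferring entirely to \cite[Lemma 6]{ML-WY:13}, whose proof is exactly your convex combination of the Baillon--Haddad co-coercivity bound $\left(z-z^*\right)^\tsp\left(\nabla f(z)-\nabla f(z^*)\right) \geq \frac{1}{L_f}\norm{\nabla f(z)-\nabla f(z^*)}^2$ with the restricted-strong-convexity bound. Your caveat about ambient convexity is well placed: co-coercivity does not follow from restricted strong convexity plus Lipschitz continuity alone (these hypotheses, as literally stated in the lemma, do not constrain $f$ away from $z^*$), but the lemma is invoked in Theorem~\ref{thm:error_bound} for $\Phi$, which is convex by Assumption~\ref{ass:convexityLocalCost}, so the argument is sound in the context where it is used.
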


\begin{remark} \label{rem:remark4}
Notice that, if $f$ is strongly convex with modulus $\nu_f,$ then it is also 
restricted strong convexity with the same modulus~\cite{ML-WY:13}. In this case, \eqref{eq:c1_c2_inequality} holds with $c_1 = \frac{1}{\nu_f + L_f}$ and $c_2 = \frac{\nu_f L_f}{\nu_f + L_f}.$
\QEDB
\end{remark}

The following is the second main result of this paper.

\begin{theorem}{\bf \textit{(Control error bounds)}}
\label{thm:error_bound}
Suppose the assumptions of 
Proposition~\ref{prop:bounded_gradient} hold, that 
$(u,y) \mapsto \Phi(u,y)$ is restricted strongly convex with 
modulus $\nu_\Phi$, and that $\eta \leq c_1$.
Then, \eqref{eq:interconnectedSystem} satisfies:
\begin{align}\label{eq:ultimate_bound_u}
&\norm{u_{(i)}^k - u^{*k}} \leq c_3^k \norm{u_{(i)}^0 - u^{*0}} 
+ \frac{c_4}{\sqrt{1 - c_3^2}} + \frac{\eta \sigma}{1-\beta}, 
    \end{align}
\begin{align*}
c_3^2 = 1 - \eta c_2 + \eta \delta - \eta^2 \delta c_2,  &&  
c_4^2 = \eta^3(\eta + \delta^{-1}) \frac{{L_\Phi^2}\sigma^2}{(1-\beta)^2},
\end{align*}
where $(u^{*k}, x^{*k}) := \text{Proj}_{\mc{A}^*}(\bar u^k, x^k),$ $\sigma$ is as 
in~\eqref{eq:gradBounds}, $\delta>0$ is an arbitrary constant, and 
$c_1$ and $c_2$ are as in Lemma~\ref{lem:lemma6} with $\nu_f = \nu_\Phi / N.$  Moreover, if $\Phi(u,y)$ is strongly convex, then $c_1$ and $c_2$ are as in Remark~\ref{rem:remark4}.
\QEDB\end{theorem}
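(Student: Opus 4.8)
The plan is to control the per-agent error $\|u_{(i)}^k - u^{*k}\|$ by routing it through the network average $\bar u^k := \frac{1}{N}\sum_{j=1}^N u_{(j)}^k$ with the triangle inequality
\[
\|u_{(i)}^k - u^{*k}\| \le \|u_{(i)}^k - \bar u^k\| + \|\bar u^k - u^{*k}\|,
\]
splitting the bound into a \emph{consensus} error and an \emph{optimization} error. The last summand $\tfrac{\eta\sigma}{1-\beta}$ of~\eqref{eq:ultimate_bound_u} will emerge from the consensus error, whereas the decaying term $c_3^k\|u_{(i)}^0-u^{*0}\|$ and the residual $\tfrac{c_4}{\sqrt{1-c_3^2}}$ will come from the optimization error.

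First I would bound the consensus error. Writing the stacked dynamics as~\eqref{eq:update_u_gamma} and setting $u_\perp^k := u_{(1:N)}^k - \mathbf{1}\otimes\bar u^k$, the double stochasticity of $W$ leaves the average invariant under mixing, so projecting~\eqref{eq:update_u_gamma} onto the disagreement subspace and using $\beta<1$ gives the contraction $\|u_\perp^{k+1}\| \le \beta\|u_\perp^k\| + \eta\,\|\gamma(u_{(1:N)}^k,y^k)\|$. Substituting the uniform bound $\|\gamma(u_{(1:N)}^k,y^k)\|\le\sigma$ from Proposition~\ref{prop:bounded_gradient} and the zero initialization $u_{(i)}^0=0$, iterating yields $\|u_\perp^k\| \le \tfrac{\eta\sigma}{1-\beta}$, and hence $\|u_{(i)}^k-\bar u^k\|\le\tfrac{\eta\sigma}{1-\beta}$ for every $i$. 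Note that under this same zero initialization $\bar u^0=0$, so $\|u_{(i)}^0-u^{*0}\|=\|\bar u^0-u^{*0}\|$, which reconciles the two error terms at $k=0$.

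Next, for $r_k := \|\bar u^k - u^{*k}\|$ I would establish the squared recursion $r_{k+1}^2 \le c_3^2 r_k^2 + c_4^2$. Averaging~\eqref{eq:distributed_control_algorithm} over $i$ gives $\bar u^{k+1} = \bar u^k - \tfrac{\eta}{N}\sum_{i}\Pi^\tsp\nabla\Phi_i(u_{(i)}^k,y^k)$, which I rewrite as a step along the aggregate reduced gradient $\tfrac1N\Pi^\tsp\nabla\Phi(\bar u^k,y^k)$ plus a discrepancy $p^k$ collecting the Lipschitz error of evaluating each $\nabla\Phi_i$ at $u_{(i)}^k$ rather than $\bar u^k$; by~\eqref{eq:lipschitz_gradient} and the consensus bound above, $\|p^k\| \lesssim \tfrac{\eta L_\Phi\sigma}{1-\beta}$. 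Comparing against the first-order optimality $\Pi^\tsp\nabla\Phi(u^{*k},y^{*k})=0$ and applying Lemma~\ref{lem:lemma6} with $\nu_f=\nu_\Phi/N$, while using $\eta\le c_1$ to dominate the quadratic gradient term, I obtain the norm contraction $r_{k+1}\le\sqrt{1-\eta c_2}\,r_k+\eta\|p^k\|$. Squaring and splitting the cross term with Young's inequality tuned by $\delta$ yields precisely $c_3^2=(1-\eta c_2)(1+\eta\delta)$ and $c_4^2=\eta(\eta+\delta^{-1})\|p^k\|^2$, matching~\eqref{eq:ultimate_bound_u}. The moving target is handled by optimality of the projection: since $(u^{*,k+1},x^{*,k+1})$ is the nearest point of $\mc A^*$ to $(\bar u^{k+1},x^{k+1})$, we have $\|\bar u^{k+1}-u^{*,k+1}\|\le\|\bar u^{k+1}-u^{*k}\|$, so the recursion closes in $r_k$. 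Unrolling and using $\sqrt{a+b}\le\sqrt a+\sqrt b$ gives $r_k\le c_3^k r_0 + \tfrac{c_4}{\sqrt{1-c_3^2}}$, and combining with the consensus bound completes~\eqref{eq:ultimate_bound_u}.

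The hardest part will be the feedback coupling entering $p^k$ in the third step: the gradients are evaluated at the measured output $y^k=Cx^k+DSu_{(1:N)}^k$, which deviates from the steady-state output $G\bar u^k+Hq$ by the plant transient $C\tilde x^k$ and by the consensus term $G(Su_{(1:N)}^k-\bar u^k)$. To invoke the restricted-strong-convexity inequality of Lemma~\ref{lem:lemma6} I must compare against the reduced gradient at steady state, so I must argue that $\tilde x^k$ does not contaminate $c_4$ with plant-dependent terms. This is precisely where Theorem~\ref{thm:convergence} (asymptotic vanishing of $\tilde x^k$) and the singular-perturbation separation of plant and controller timescales are needed, so that the output mismatch is absorbed into the $\mathcal O(\sigma/(1-\beta))$ perturbation rather than introducing $P$- or $A$-dependent terms. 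The remaining difficulty is purely bookkeeping: tracking the dimension constants ($N$ and $\|\Pi\|$) so that the bound on $p^k$ collapses exactly to the claimed $\tfrac{\eta L_\Phi\sigma}{1-\beta}$.
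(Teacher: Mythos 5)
Your proposal is correct and follows the paper's proof essentially step for step: the same triangle-inequality split through $\bar u^k$, the same $\frac{\eta\sigma}{1-\beta}$ consensus bound obtained from Proposition~\ref{prop:bounded_gradient} together with the contraction of $W$ off the consensus subspace, and the same treatment of $\|\bar u^k - u^{*k}\|$ via the averaged update, Lemma~\ref{lem:lemma6} with $\eta \le c_1$, Young's inequality tuned by $\delta$, and the projection property of $\mc A^*$, yielding exactly the claimed $c_3$ and $c_4$. The only point of divergence is your closing concern about the plant transient $\tilde x^k$: the paper's proof never performs that separation --- it evaluates all gradients at the measured output $y^k$ and applies Lemma~\ref{lem:lemma6} directly to $\bar g(u^k_{(1:N)}, y^k)$ --- so what you flag as ``the hardest part'' is a looseness the paper glosses over rather than an extra step you would need in order to reproduce its argument.
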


\begin{proof}
We begin by proving~\eqref{eq:ultimate_bound_u}. It will be convenient to measure the 
control error relative to the average controller state:
$\bar{u}^k := \frac{1}{N} \sum_{i=1}^{N} u_{(i)}^k.$
We have
    \begin{align}\label{eq:bigIneq}
        \norm{u_{(i)}^k - u^{*k}} &\leq \norm{u_{(i)}^k - \bar u^k} + \norm{\bar u^k - u^{*k}}.
    \end{align}
The proof is organized into {two} main steps.

\noindent 
\textit{1) Bound for $\norm{u_{(i)}^k - \bar u^k}$.}
By expanding~\eqref{eq:update_u_gamma} in time:
    \begin{align}\label{eq:proofLemma2_1}
        u_{(1:N)}^k = -\eta \sum_{s=0}^{k-1} (W^{k-1-s} \otimes I) \gamma(u^s_{(1:N)}, y^s).
    \end{align}
    Next, let $\bar u_{(1:N)}^k = (\bar u^k, \cdots, \bar u^k) \in \mathbb{R}^{mN}$, 
    and notice that 
    $\bar u_{(1:N)}^k = \frac{1}{N} ((1_N1_N^\tsp) \otimes I) u_{(1:N)}^k.$
    As a result,
    \begin{align}\label{eq:proofLemma2_2}
         \Vert u_{(i)}^k &- \bar u^k \Vert \leq \norm{u_{(1:N)}^k - \bar u_{(1:N)}^k} \nonumber \\
        &= \norm{u_{(1:N)}^k - \frac{1}{N} ((1_N1_N^\tsp) \otimes I) u_{(1:N)}^k} \nonumber \\
        &= \norm{-\eta \sum_{s=0}^{k-1} (W^{k-1-s} \otimes I) \gamma(u^s_{(1:N)}, y^s) \nonumber \\
        &+ \eta \sum_{s=0}^{k-1} \frac{1}{N}((1_N1_N^\tsp W^{k-1-s}) \otimes I) \gamma(u^s_{(1:N)}, y^s)} \nonumber \\
        &\overset{(a)}{=} \norm{-\eta \sum_{s=0}^{k-1} (W^{k-1-s} \otimes I) \gamma(u^s_{(1:N)}, y^s) \nonumber \\
        &+ \eta \sum_{s=0}^{k-1} \frac{1}{N}((1_N1_N^\tsp) \otimes I) \gamma(u^s_{(1:N)}, y^s)} \nonumber \\
        &= \eta \norm{\sum_{s=0}^{k-1} \left( \left( W^{k-1-s} - \frac{1}{N} 1_N1_N^\tsp \right) \otimes I \right) \gamma(u^s_{(1:N)}, y^s)} \nonumber \\
        &\leq \eta \sum_{s=0}^{k-1} \norm{ W^{k-1-s} - \frac{1}{N} 1_N1_N^\tsp} \norm{\gamma(u^s_{(1:N)}, y^s)} \nonumber \\
        &=  \eta \sum_{s=0}^{k-1} \beta^{k-1-s} \norm{\gamma(u^s_{(1:N)}, y^s)},
    \end{align}
    where $(a)$ holds because $W$ is doubly stochastic. From 
    $\norm{\gamma(u^k_{(1:N)}, y^k)} \leq \sigma$ and $\beta < 1$, it follows that
    \begin{align}\label{eq:firstTerm}
        \norm{u_{(i)}^k - \bar u^k} &\leq \eta \sum_{s=0}^{k-1} \beta^{k-1-s} \norm{\gamma(u^s_{(1:N)}, y^s)} \leq \sum_{s=0}^{k-1} \beta^{k-1-s} \sigma \nonumber \\
        &\leq \frac{\eta \sigma}{1-\beta}.
    \end{align}

\noindent
\textit{2) Bound for $\norm{\bar u^k - u^{*k}}$.}
We will denote in compact form, $\bar e^k := \bar u^k - u^{*k}.$
To bound this term,  let
    \begin{align*}
        g(u_{(1:N)}^k, y^k) &= \frac{1}{N} \sum_{i=1}^N \Pi^\tsp \nabla \Phi_i(u_{(i)}^k, y^k), \nonumber \\
        \bar g(u_{(1:N)}^k, y^k) &= \frac{1}{N} \sum_{i=1}^N \Pi^\tsp \nabla \Phi_i(\bar u^k, y^k).
    \end{align*}
    We are interested in $g(u_{(1:N)}^k, y^k)$ because $- \eta g(u_{(1:N)}^k, y^k)$ updates $\bar u^k$. To see this, by taking the average of \eqref{eq:distributed_control_algorithm} over $i$ and noticing $W = [w_{ij}]$ is doubly stochastic, we obtain
    \begin{align} \label{eq:averageUpdate}
        \bar u^{k+1} &= \frac{1}{N} \sum_{i=1}^N u_{(i)}^{k+1} \nonumber\\
        &= \frac{1}{N} \sum_{i, j=1}^N w_{ij}u_{(j)}^{{k}}
        - \frac{\eta}{N} \sum_{i=1}^N \Pi^\tsp \nabla \Phi_i(u_{(i)}^k, y^k) \nonumber\\
        &= \bar u^k - \eta g(u_{(1:N)}^k, y^k).
    \end{align}

\noindent
Before proceeding notice that the following bound holds:
    \begin{align*}
        \norm{\Pi^\tsp (\nabla \Phi_i(u_{(i)}^k, y^k) - \nabla \Phi_i(\bar u^k, y^k))} &\leq L_{\Phi} \norm{u_{(i)}^k - \bar u^k} \\
        &\overset{(a)}{\leq} \frac{\eta \sigma L_{\Phi}}{1-\beta}
    \end{align*}
by Assumptions \ref{ass:convexityLocalCost}, \ref{ass:networkConnectivity}, and \eqref{eq:lipschitz_gradient}, and where $(a)$ follows from \eqref{eq:firstTerm}. Moreover, we also have
    \begin{align}\label{eq:boundGGbar}
        & \norm{g(u_{(1:N)}^k, y^k) - \bar g(u_{(1:N)}^k, y^k)}\nonumber\\
        &\quad\quad = \norm{\frac{1}{N} \sum_{i=1}^N \Pi^\tsp (\nabla \Phi_i(u_{(i)}^k, y^k) - \nabla \Phi_i(\bar u^k, y^k))} \nonumber\\
        &\quad\quad \leq \frac{1}{N} L_{\Phi} \sum_{i=1}^N \norm{u_{(i)}^k - \bar u^k}  \leq \frac{\eta \sigma L_{\Phi}}{1 - \beta}.
    \end{align}

\noindent
    Recalling that $(u^{*k+1},x^{*k+1}) = \text{Proj}_\mathcal{A^*}(\bar u^{k+1}, x^{k+1})$ and $\bar e^{k+1} = \bar u^{k+1} - u^{*k+1}$, we have
    \begin{align*}
        &\norm{\bar e^{k+1}}^2 \overset{(a)}{\leq} \norm{\bar u^{k+1} - u^{*k}}^2 = \norm{\bar u^k - u^{*k} - \eta g(u_{(1:N)}^k, y^k)}^2\\
        &= \norm{\bar e^k - \eta \bar g(u_{(1:N)}^k, y^k) + \eta(\bar g(u_{(1:N)}^k, y^k) - g(u_{(1:N)}^k, y^k))}^2 \nonumber \\
        &= \norm{\bar e^k - \eta \bar g(u_{(1:N)}^k, y^k)}^2 + \eta^2 \norm{\bar g(u_{(1:N)}^k, y^k) \nonumber \\
        &- g(u_{(1:N)}^k, y^k)}^2 \nonumber \\
        &+ 2\eta (\bar g(u_{(1:N)}^k, y^k) - g(u_{(1:N)}^k, y^k))^\tsp (\bar e^k - \eta \bar g(u_{(1:N)}^k, y^k)) \nonumber \\
        &\overset{(b)}{\leq} (1 + \eta \delta) \norm{\bar e^k - \eta \bar g(u_{(1:N)}^k, y^k)}^2 \\
        &\quad \quad + \eta(\eta + \delta^{-1}) \norm{\bar g(u_{(1:N)}^k, y^k) 
        - g(u_{(1:N)}^k, y^k)}^2.
    \end{align*}
$(a)$ holds since $u^*_{k+1}$ is the projection of 
$\bar u_{k+1}$ onto the optimality set, and thus for any other optimizer 
$\hat u^*_{k+1}$ we have $|\hat u^*_{k+1} - \bar u_{k+1}| \geq |u^*_{k+1} - \bar u_{k+1}|$. $(b)$ follows from 
$\pm 2 a^\tsp b \leq \delta^{-1} \norm{a}^2 + \delta \norm{b}^2$ for any 
$\delta \geq 0$.     
Next, we shall bound $\norm{\bar e^k - \eta \bar g(u_{(1:N)}^k, y^k)}^2$. Applying Lemma \eqref{lem:lemma6}, 
we have
    \begin{align*}
        &\norm{\bar e^k - \eta \bar g(u_{(1:N)}^k, y^k)}^2 = \norm{\bar e^k}^2 + \eta^2 \norm{\bar g(u_{(1:N)}^k, y^k)}^2 \nonumber \\
        &- 2\eta \bar e^{k\tsp} \bar g(u_{(1:N)}^k, y^k) \leq \norm{\bar e^k}^2 + \eta^2 \norm{\bar g(u_{(1:N)}^k, y^k)}^2 \nonumber \\
        &- \eta c_1 \norm{\bar g(u_{(1:N)}^k, y^k)}^2 - \eta c_2 \norm{\bar e^k}^2 \nonumber \\
        &= (1-\eta c_2)\norm{\bar e^k}^2 + \eta (\eta - c_1) \norm{\bar g(u_{(1:N)}^k, y^k)}^2.
    \end{align*}
    We shall pick $\eta \leq c_1$ so that $\eta (\eta - c_1) \norm{\bar g(u_{(1:N)}^k, y^k)}^2 \leq 0$. Then, from the last two inequality arrays, we have
    \begin{align*}
        &\norm{\bar e^{k+1}}^2 \leq (1 + \eta \delta)(1 - \eta c_2)\norm{\bar e^k}^2\\
        &\quad\quad\quad\quad + \eta (\eta + \delta^{-1}) \norm{\bar g(u_{(1:N)}^k, y^k) 
        - g(u_{(1:N)}^k, y^k)}^2 \nonumber \\
        &\overset{(a)}{\leq} (1 - \eta c_2 + \eta \delta - \eta^2 \delta c_2) \norm{\bar e^k}^2 + \eta^3 (\eta + \delta^{-1}) \frac{L_\Phi^2 \sigma^2}{(1-\beta)^2},
    \end{align*}
    where $(a)$ follows from~\eqref{eq:boundGGbar}. Note that if $\Phi$ is restricted strongly convex, then $c_1 c_2 = \frac{\theta (1-\theta) \nu_\Phi}{L_\Phi} < 1$ because $\theta \in [0, 1]$ and $\nu_\Phi < L_\Phi$; if $\Phi$ is strongly convex, then $c_1 c_2 = \frac{\nu_\Phi L_\Phi}{(\nu_\Phi + L_\Phi)^2} < 1$. Therefore, we have $c_1 < 1/c_2$. When $\eta < c_1$, $(1 + \eta \delta)(1 - \eta c_2) > 0$.
    
    Using $\norm{\bar e^k}^2 \leq (c_3^{k})^2 \norm{\bar e^0}^2 + \frac{1 - (c_3^{k})^2}{1 - c_3^2} c_4^2 \leq (c_3^{k})^2 \norm{\bar e^0}^2 +\frac{c_4^2}{1 - c_3^2},$
    we get
    \begin{align}\label{eq:bound_bar_e}
        \norm{\bar e^k} \leq c_3^k \norm{\bar e^0} + \frac{c_4}{\sqrt{1 - c_3^2}}.
    \end{align}
The claim thus follows by combining~\eqref{eq:firstTerm} 
and \eqref{eq:bound_bar_e}
\end{proof}

Theorem~\ref{thm:error_bound} shows that the local agents states converge 
geometrically until reaching a neighborhood of the optimal solution. 
The size of this neighborhood depends on two quantities: 
$\frac{\eta \sigma}{1-\beta},$ which measures the asymptotic error 
due to an inexact consensus (namely, $ \norm{u_{(i)}^k - \bar u^k}$, where $\bar{u}^k := \frac{1}{N} \sum_{i=1}^{N} u_{(i)}^k$), and 
$\frac{c_4}{\sqrt{1 - c_3^2}},$  which quantifies the asymptotic error between 
the average and the optimizer (namely, $\norm{\bar u^k - u^{*k}}$). 
We conclude with the following remark, which relates $\frac{c_4}{\sqrt{1 - c_3^2}}$ explicitly with $\eta$ and $\beta.$

\begin{remark}{\bf \textit{(Refinement of bound~\eqref{eq:ultimate_bound_u})}} By choosing $\delta = \frac{c_2}{2(1-\eta c_2)},$ we have $c_3 = \sqrt{1 - \frac{\eta c_2}{2}} \in (0, 1)$ and 
    \begin{align*}
        \frac{c_4}{\sqrt{1 - c_3^2}} &= \frac{\eta L_\Phi \sigma}{1-\beta} \sqrt{\frac{\eta (\eta + \frac{2(1 - \eta c_2)}{c_2})}{\frac{\eta c_2}{2}}} = \frac{\eta L_\Phi \sigma}{1-\beta} \sqrt{\frac{4}{c_2^2} - \frac{2}{c_2} \eta} \nonumber \\
        &\leq \frac{2\eta L_\Phi \sigma}{c_2(1-\beta)} 
        = \mathcal{O} \left( \frac{\eta}{1-\beta} \right).
    \end{align*}
In this case, the local agent states converge geometrically to an 
$\mathcal{O} \left( \frac{\eta}{1-\beta} + \frac{\eta \sigma}{1-\beta}\right)$
neighborhood 
of the solution set $\mc A^*$
\QEDB\end{remark}

\section{Simulation Results}
\label{sec:simulations}
In this section, we report our numerical results. We consider a 
circular network consisting of $N = 15$ agents. The elements of 
matrices $B$, $C$, and $E$ are randomly drawn from the normal 
distribution and $A$ is chosen as a Schur stable matrix with random 
entries and circulant structure. 
We choose $n_i=1, \ \forall i,$ so that $n=N.$ We choose the mixing matrix $W$ with the same circular structure as $A$ using the Metropolis weight selection.

We apply \eqref{eq:interconnectedSystem} to:
\begin{align} \label{eq:problem}
    \underset{u \in \real^m}{\text{minimize}} ~\sum_{i=1}^N\frac{1}{2} \left(\norm{u}_{R_i}^2 + \norm{Gu+Hq-y^{ref}}_Q^2 \right),
\end{align}
where $Q = I_p$ and $R_i = \alpha_i I_m$ are the multiplied identity matrices of the corresponding dimension. $\alpha_i$ is a constant, randomly chosen from $[0.001, 0.1]$ interval for each agent. We set the desired output to $y^{ref} = 0.5\textbf{1}_p$, where $\textbf{1}_p$ is a vector of all ones. We further scale $A$ to let $\norm{A}_2 = 0.2$. The constant disturbance $q$ is generated from the standard uniform distribution. 
\begin{figure}[t]
    \centering 
    \subfigure[Comparison of the proposed distributed algorithm for the problem \eqref{eq:problem} with different stepsizes.]{\includegraphics[width=1\columnwidth]{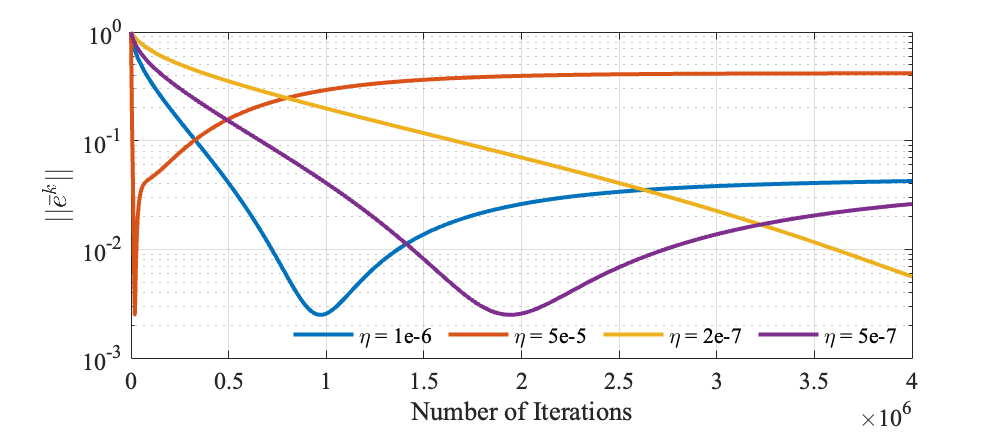}}
    \centering \subfigure[Inputs of the system with $\eta = 1\times10^{-6}$.]{\includegraphics[width=1\columnwidth]{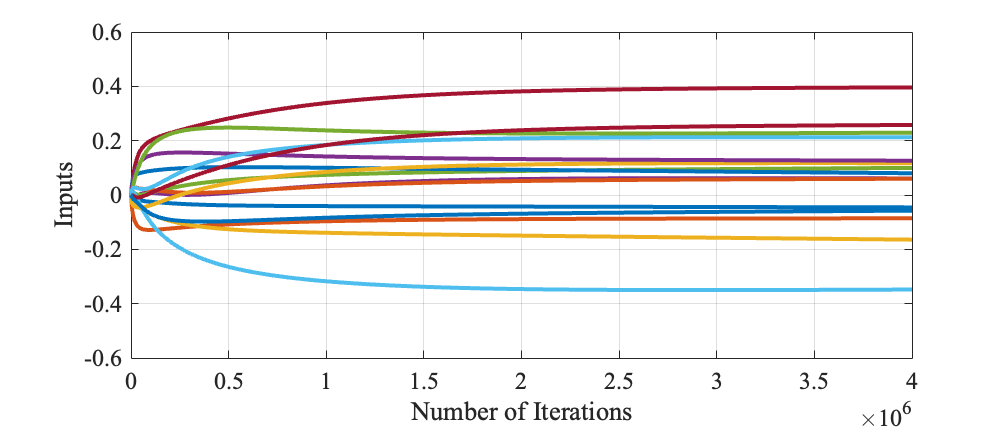}} 
    \centering \subfigure[Outputs of the system with $\eta = 1\times10^{-6}$.]{\includegraphics[width=1\columnwidth]{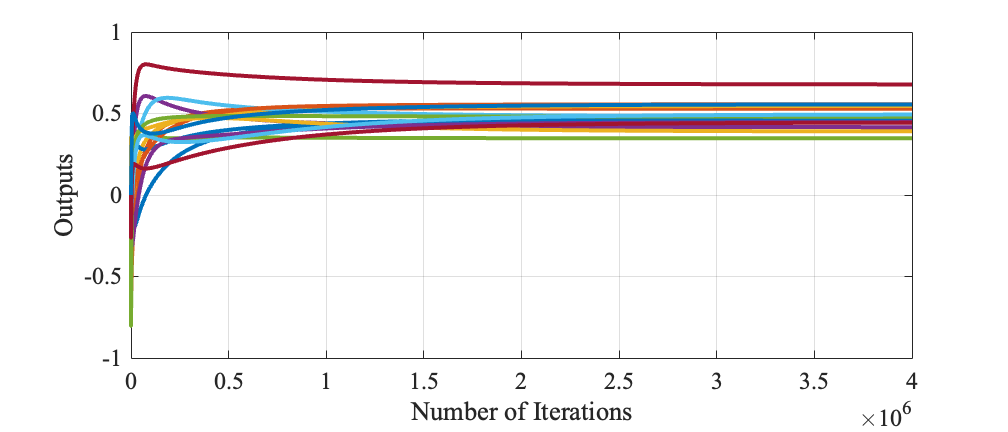}}
    \caption{Error $\bar e^k = \frac{1}{N} \sum_{i=1}^{N} \norm{u_{(i)}^k - u^{*k}}$, inputs, and outputs of the proposed decentralized algorithm with different stepsizes, where $u^{*k} = \text{Proj}_\mathcal{A^*}(\frac{1}{N} \sum_{i=1}^{N} u_{(i)}^k)$.}
    \vspace{-.4cm}
    \label{fig:sims}
\end{figure}

Fig.~\ref{fig:sims}(a) illustrates the error $\bar e^k$ for four different choices of stepsize. The simulations show that $\bar e^k$ reduces geometrically until reaching an $\mathcal{O} (\eta)$-neighborhood of the optimal point, thus validating the conclusions of Theorem~\ref{thm:error_bound}. Moreover, it shows that a smaller $\eta$ causes the algorithm to converge slowly but more accurately, as predicted by Theorem~\ref{thm:error_bound}. Fig.~\ref{fig:sims}(b) and Fig.~\ref{fig:sims}(c) illustrate the inputs and outputs of the system. Notice that the optimizer is a point that strikes a balance between tracking $y^{ref} = 0.5\textbf{1}_p$ and minimizing the control effort $\norm{u}_R^2.$ As the inputs of the system converge to the solution of~\eqref{eq:problem}, the outputs of the system are also approaching the desired output (i.e. $0.5$). Recalling the fact that DGD converges inexactly and the presence of disturbance justifies the error between the desired output and the actual system output.

\section{Conclusions} \label{sec:conclusions}
We proposed a distributed controller to solve optimal steady-state regulation 
problems while rejecting constant disturbances. The controller follows 
a distributed architecture; as such, the approach scales well with the system 
size and can be applied in cases where the individual cost functions need to be 
maintained private. Under convexity and smoothness assumptions, we showed 
that the controller state converges; under restricted strong convexity assumptions, we showed that the controller converges geometrically to a neighborhood of the optimal solution, in line with the existing literature on distributed optimization~\cite{KY-QL-WY:16N}. 
Our work opens the opportunity for several future works, including scenarios 
where the output feedback signals are also local, an investigation of 
algorithms that can ensure exact convergence, the study of constrained 
optimization objectives, and a generalization to nonlinear systems. 
\bibliographystyle{IEEEtran}
\bibliography{REFs/full_GB,REFs/GB,REFs/alias,REFs/AM_bibliography}

\addtolength{\textheight}{-12cm}   

\end{document}